%
%
%

\documentclass[graybox]{svmult}


\usepackage{mathptmx}       
\usepackage{helvet}         
\usepackage{courier}        
\usepackage{type1cm}        
%
\usepackage{makeidx}         
\usepackage{graphicx}        
\usepackage{multicol}        
\usepackage[bottom]{footmisc}

\usepackage{enumerate} 
\usepackage{amsmath, amssymb} 
\usepackage{url} 

\def\PP{{\cal P}} 

\makeindex             


\begin{document}

\title*{On the $L_2$ Markov Inequality with Laguerre Weight}
\author{Geno Nikolov and Alexei Shadrin}
\institute{Geno Nikolov \at Department of Mathematics and
Informatics, Sofia University ``St. Kliment Ohridski", 5 James
Bourchier Blvd., 1164 Sofia, Bulgaria \email{geno@fmi.uni-sofia.bg}
\and Alexei Shadrin \at Department of Applied Mathematics and
Theoretical Physics, Cambridge University, Wilberforce Road,
Cambridge CB3 0WA, United Kingdom \email{a.shadrin@damtp.cam.ac.uk}}
%
%
\maketitle

\abstract*{Let $w_{\alpha}(t)=t^{\alpha}\,e^{-t}$, $\alpha>-1$, be
the Laguerre weight function, and $\Vert\cdot\Vert_{w_\alpha}$
denote the associated $L_2$-norm, i.e.,
$$
\Vert f\Vert_{w_\alpha}:=\Big(\int_{0}^{\infty}w_{\alpha}(t)\vert
f(t)\vert^2\,dt\Big)^{1/2}.
$$
Denote by $\PP_n$ the set of algebraic polynomials of degree not
exceeding $n$. We study the best constant $c_n(\alpha)$ in the
Markov inequality in this norm,
$$
\Vert p^{\prime}\Vert_{w_\alpha}\leq c_n(\alpha)\,\Vert
p\Vert_{w_\alpha}\,,\quad p\in\PP_n\,,
$$
namely the constant
$$
c_{n}(\alpha)=\sup_{\mathop{}^{p\in\PP_n}_{p\ne 0}}\frac{\Vert
p^{\prime}\Vert_{w_\alpha}}{\Vert p\Vert_{w_\alpha}}\,,
$$
and we are also interested in its asymptotic value
$$
c(\alpha)=\lim_{n\rightarrow\infty}\frac{c_{n}(\alpha)}{n}\,.
$$
In this paper we obtain lower and upper bounds for both
$c_{n}(\alpha)$ and $c(\alpha)$. \\
Note that according to a result of P. D\"{o}rfler from 2002,
$c(\alpha)=[j_{(\alpha-1)/2,1}]^{-1}$, with $j_{\nu,1}$ being the
first positive zero of the Bessel function $J_{\nu}(z)$, hence our
bounds for $c(\alpha)$ imply bounds for $j_{(\alpha-1)/2,1}$ as
well. }

\abstract{
Let $w_{\alpha}(t)=t^{\alpha}\,e^{-t}$, $\alpha>-1$, be
the Laguerre weight function, and $\Vert\cdot\Vert_{w_\alpha}$
denote the associated $L_2$-norm, i.e.,
$$
\Vert f\Vert_{w_\alpha}:=\Big(\int_{0}^{\infty}w_{\alpha}(t)\vert
f(t)\vert^2\,dt\Big)^{1/2}.
$$
Denote by $\PP_n$ the set of algebraic polynomials of degree not
exceeding $n$. We study the best constant $c_n(\alpha)$ in the
Markov inequality in this norm,
$$
\Vert p^{\prime}\Vert_{w_\alpha}\leq c_n(\alpha)\,\Vert
p\Vert_{w_\alpha}\,,\quad p\in\PP_n\,,
$$
namely the constant
$$
c_{n}(\alpha)=\sup_{\mathop{}^{p\in\PP_n}_{p\ne 0}}\frac{\Vert
p^{\prime}\Vert_{w_\alpha}}{\Vert p\Vert_{w_\alpha}}\,,
$$
and we are also interested in its asymptotic value
$$
c(\alpha)=\lim_{n\rightarrow\infty}\frac{c_{n}(\alpha)}{n}\,.
$$
In this paper we obtain lower and upper bounds for both
$c_{n}(\alpha)$ and $c(\alpha)$. \\
Note that according to a result of P.
D\"{o}rfler from 2002, $c(\alpha)=[j_{(\alpha-1)/2,1}]^{-1}$, with
$j_{\nu,1}$ being the first positive zero of the Bessel function
$J_{\nu}(z)$, hence our bounds for $c(\alpha)$ imply bounds for
$j_{(\alpha-1)/2,1}$ as well.
}

\section{Introduction and Statement of the Results $L_p$}
\label{sec:1} The Markov inequality (or, to be more precise, the
inequality of the brothers Markov) has proven to be one of the most
important polynomial inequalities, with numerous applications in
approximation theory, numerical analysis, and many other branches of
mathematics. In its classical variant it reads as follows:\smallskip

\noindent {\bf The inequality of the brothers Markov.~} \emph{If
$p\in\PP_n$, then for $\,k=1,\ldots, n$,
$$
\Vert p^{(k)}\Vert \le T_n^{(k)}(1)\,\Vert p\Vert\,.
$$
The equality is attained if and only if $\,p=c\,T_n$, where
$\,T_n\,$ is the $n$-th Chebyshev polynomial of the first kind,
$\,T_n(x)=\cos n\arccos x,\ \ x\in [-1,1]$\,.}\smallskip

Here, $\,\PP_n$ is the set of algebraic polynomials of degree not
exceeding $\,n\,$ and $\,\Vert\cdot\Vert\,$ is the uniform norm in
$\,[-1,1]$, $\,\Vert f\Vert:=\sup\{ |f(x)|\, :\, x\in [-1,1]\}$.

Proved for $k=1$ in 1889 by Andrey Markov \cite{AM1889}, and for
$k\ge 1$, in 1892, by his kid brother, Vladimir  Markov
\cite{VM1892}, throughout the years Markov inequality has got many
alternative proofs and various generalizations. For the intriguing
story of Markov's inequality in the uniform norm, and twelve of its
proofs, we refer the reader to the survey paper \cite{AS2004}.
Another survey on the subject is \cite{BB1}. For some recent
developments, see \cite{BN1996, GN1998, GN2001, GN2005, GN2005b,
GN2005A, NS2012, NS2014}.

Generally, Markov-type inequalities provide upper bounds for
a certain norm of a derivative of an algebraic polynomial
$p\in\PP_n$ in terms of some (usually the same) norm of $p$.
Our subject here is
Markov-type inequalities in $L_2$-norms for the first derivative of
an algebraic polynomial. For a weight function $w$ on the finite
or infinite interval $\,(a,b)\,$ with all moments finite, let
$\,\Vert\cdot\Vert_{w}\,$ be the associated $L_2$-norm,
$$
\Vert f\Vert_{w}:=\Big(\int_{a}^{b}w(t)\vert
f(t)\vert^2\,dt\Big)^{1/2},
$$
and let $c_n(w)$ be the best (i.e., the smallest) constant in the $L_2$
Markov inequality
$$
\Vert p^{\prime}\Vert_{w}\leq c_{n}(w)\,\Vert p\Vert_{w},\qquad p\in
\PP_n\,.
$$
This constant possesses a simple characterization: it is the largest
singular value of a certain matrix, see, e.g., \cite{PD1987} or
\cite{MMR1994}, however the exact values of the best Markov
constants are generally unknown even in the cases of the classical
weight functions of Laguerre and Jacobi, and, in particular, of
Gegenbauer.\smallskip

{\bf The Hermite weight} \boldmath{ $w_H(t) = e^{-t^2},\, t\in
\mathbb{R}.$} This is the only case where both the sharp Markov
constant and the extremal polynomial are known. Namely, in this case
the sharp Markov constant is $c_n(w_H)=\sqrt{2n}$\,, and the unique
(up to a constant factor) extremal polynomial is the $n$-th Hermite
polynomial $H_n(t)= (-1)^n\,
e^{t^2}\big(\frac{d}{dt}\big)^n\,e^{-t^2}$. The extremality of $H_n$
persists in the $L_2$ Markov inequalities for higher order
derivatives,
$$
\Vert p^{(k)}\Vert_{w_H}\le c_n^{(k)}(w_H)\,\Vert p\Vert_{w_H}\,,\quad
k=1,\ldots,n\,,
$$
with the sharp Markov constants given by $
c_n^{(k)}(w_H)=\Big(2^{k}\,\frac{n!}{(n-k)!}\Big)^{1/2}$\,. The
reason for this case to be trivial comes from the fact that the
derivatives of Hermite's polynomials are Hermite's polynomials of
lower degrees \cite[Chapt. 5]{GS}, and as a result, the sharp Markov
constant is simply the largest entry in a diagonal matrix.\medskip

\textbf{The Gegenbauer weight} \boldmath{
$\,w_{\lambda}(t)=(1-t^2)^{\lambda-1/2}\,$},
\boldmath{$\lambda>-1/2,\,t\in [-1,1]$.} Neither the sharp Markov
constant nor the extremal polynomial are known explicitly in that
case. For $\lambda=1/2$ (a constant weight function) E. Schmidt
\cite{ES} found tight estimates for the Markov constant, which in a
slightly weaker form look like
$$
   \frac{1}{\pi}(n+3/2)^2 < c_n(w_{1/2}) < \frac{1}{\pi}(n+2)^2\,,\qquad n>5.
$$
Recently, A. Kro\'{o} \cite{AK2008} turned back
to this case, identifying $c_{n}(w_{1/2})$ as the largest positive
root of a polynomial of degree $n$. This polynomial was found
explicitly (to some extent) by Kro\'{o}.

Nikolov \cite{GN2003} studied two further
special cases $\lambda=0$ and $\lambda=1$; in particular, he
obtained the following two-sided estimates for the corresponding
best Markov constants:
\begin{eqnarray*}
   & 0.472135\, n^2 \le c_{n}(w_0) \le 0.478849\, (n+2)^2\,, & \\
   & 0.248549\, n^2 \le c_{n}(w_1) \le 0.256861\, (n+\frac{5}{2})^2\,. &
\end{eqnarray*}

In \cite{SNA2015} we obtained an upper bound for $c_n(w_{\lambda})$,
which is valid for all $\lambda>-1/2$:
$$
c_{n}(w_{\lambda})< \frac{(n+1)(n+2\lambda+1)}{2\sqrt{2\lambda+1}}
\,,
$$
however it seems that the correct order with respect to $\lambda$
should be $O(1/\lambda)$. Also, it has been shown in \cite{SNA2015}
that the extremal polynomial in the $L_2$ Markov inequality
associated with $w_{\lambda}\,$, is even or odd when $n$ is even or
odd, accordingly (for $\lambda\ge 0$ this result was established, by
a different argument, in \cite{GN2003}).

\textbf{The Laguerre weight} $\,w_{\alpha}(t)=t^{\alpha}e^{-t},\ \
t\in (0,\infty)\,,\ \alpha>-1\,$. In the present paper we study the
best constant in the Markov inequality for the first derivative of
an algebraic polynomial in the $L_2$-norm, induced by the Laguerre
weight function. We denote this norm by
$\Vert\cdot\Vert_{w_{\alpha}}$,
\begin{equation}\label{e1}
\Vert
f\Vert_{w_{\alpha}}
:=\Big(\int_{0}^{\infty}t^{\alpha}e^{-t}|f(t)|^2\,dt\Big)^{1/2}\,.
\end{equation}

Further, we denote by $c_n(\alpha)$ the best constant in the Markov
inequality in this norm,
\begin{equation}\label{e2}
c_{n}(\alpha)=\sup_{\mathop{}^{p\in\PP_n}_{p\ne 0}}\frac{\Vert
p^{\prime}\Vert_{w_{\alpha}}}{\Vert p\Vert_{w_{\alpha}}}\,.
\end{equation}

Before formulating our results, let us give a brief account on the
known results on the Markov inequality in the $L_2$ norm induced by
the Laguerre weight function. P. Tur\'{a}n \cite{PT1960} found the
sharp Markov constant in the case $\alpha=0$, namely,
\begin{equation}\label{e3}
c_n(0)=\Big(2\sin\frac{\pi}{4n+2}\Big)^{-1}\,.
\end{equation}

In 1991, D\"{o}rfler \cite{PD1991} proved the inequalities
\begin{equation}\label{e4}
\frac{n^2}{(\alpha+1)(\alpha+3)}\leq
\big[c_n(\alpha)\big]^{2}\le\frac{n(n+1)}{2(\alpha+1)}\,,
\end{equation}
(the first one in a somewhat stronger form), and in 2002 he found
\cite{PD2002} the sharp asymptotic of $c_n(\alpha)$, namely,
\begin{equation}\label{e5}
c(\alpha):=\lim_{n\rightarrow\infty}\frac{c_n(\alpha)}{n}=\frac{1}{j_{(\alpha-1)/2,1}}\,,
\end{equation}
where $j_{\nu,1}$ is the first positive zero of the Bessel function
$J_{\nu}(z)$\,.

In a series of recent papers \cite{BD2010, BD2010a, BD2011} A.
B\"{o}ttcher and P. D\"{o}rfler studied the asymptotic values of the
best constants in $L_2$ Markov-type inequalities of a rather general
form, namely 1) they include estimates for higher order derivatives
and 2) different $L_2$-norms of Laguerre or Jacobi type are applied
to the polynomial and its derivatives (i.e. at the two sides of
their Markov inequalities).

Precisely, they proved that those asymptotic values are equal to the
norms of certain Volterra operators. It seems, however, that finding
the norms of these related Volterra operators explicitly is equally
difficult task. They provide also some upper and lower bounds for
the asymptotic values, but they do not match (they are similar to
those given in \eqref{e4}).

Our main goal is upper and lower bounds for the Markov constant $c_n(\alpha)$
which are valid for all $n$ and $\alpha$.
\smallskip

In this paper we prove the following.

%
%
\begin{theorem} \label{t2}
For all $\alpha>-1\,$ and $\,n\in \mathbb{N}\,$, $\,n\geq 3\,$, the
best constant $\,c_n(\alpha)\,$ in the Markov inequality
$$
\Vert p^{\prime}\Vert_{w_{\alpha}} \leq c_n(\alpha)\,\Vert
p\Vert_{w_{\alpha}}\,,\qquad p\in\PP_n
$$
admits the estimates
$$
   \frac{ 2 \big(n+\frac{2\alpha}{3}\big)
         \big(n-\frac{\alpha+1}{6}\big)}
     {(\alpha+1)(\alpha+5)}
< \big[c_n(\alpha)\big]^2 <
  \frac{\big(n+1\big) \big(n+\frac{2(\alpha+1)}{5}\big)}
  {(\alpha+1)\big((\alpha+3)(\alpha+5)\big)^{\frac{1}{3}}}\,,
$$
where for the left-hand inequality it is additionally assumed that
$n>(\alpha+1)/6$\,.
\end{theorem}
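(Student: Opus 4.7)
The plan is to recast $[c_n(\alpha)]^2$ as the top eigenvalue of an explicit symmetric positive semidefinite matrix, to prove the lower bound by evaluating the Rayleigh quotient at a carefully chosen trial vector, and to prove the upper bound via a trace inequality---an approach strongly suggested by the cube-root structure of the claimed expression.

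\emph{Matrix set-up.} In the orthonormal Laguerre basis $\hat L_k^{(\alpha)} := L_k^{(\alpha)}/\sqrt{h_k}$ (with $h_k := \Gamma(k+\alpha+1)/k!$), the standard identities $\partial L_k^{(\alpha)} = -L_{k-1}^{(\alpha+1)} = -\sum_{j<k} L_j^{(\alpha)}$ represent $d/dt$ by a strictly lower triangular matrix $D$ with entries $D_{jk} = -\sqrt{h_j/h_k}$ for $j<k$; consequently $[c_n(\alpha)]^2 = \lambda_{\max}(M)$ with $M := D^{T}D$. Using the parallel-summation identity $\sum_{\ell=0}^{m-1} h_\ell = m\,h_m/(\alpha+1)$, the matrix $M$ takes the compact form
$$
M_{ij} = \frac{\min(i,j)}{\alpha+1}\sqrt{\frac{h_{\min(i,j)}}{h_{\max(i,j)}}}, \qquad M_{ii} = \frac{i}{\alpha+1}.
$$

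\emph{Lower bound.} To prove the left inequality I will compute the Rayleigh quotient $\langle M a, a\rangle/\langle a, a\rangle$ at a trial vector $a_k = \varphi(k)\sqrt{h_k}$, where $\varphi$ is a low-degree polynomial in $k$ approximating the principal eigenvector. Both inner products reduce to closed forms via $\sum_{j=0}^{m} j\,h_j = \tfrac{m(m+\alpha+1)}{\alpha+2}\,h_m$ and its analogues for $\sum_j j^r h_j$; matching the resulting rational function of $n$ and $\alpha$ against the claimed lower bound then fixes $\varphi$, and the restriction $n>(\alpha+1)/6$ is expected to enter by requiring the resulting quadratic in $n$ to be positive.

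\emph{Upper bound.} Since the Frobenius bound $\lambda_{\max}(M) \le \operatorname{tr}(M) = n(n+1)/[2(\alpha+1)]$ already reproduces D\"orfler's estimate \eqref{e4} exactly, a sharper trace inequality is needed. I use $[c_n(\alpha)]^6 = \lambda_{\max}(M)^3 \le \operatorname{tr}(M^3)$, valid since $M$ is positive semidefinite. The key structural observation is that for any triple $(i,j,k)$ with sorted values $a\le b\le c$, the three edge weights telescope and the three minima combine to give
$$
M_{ij}M_{jk}M_{ki} = \frac{a^2 b}{(\alpha+1)^3}\cdot\frac{h_a}{h_c},
$$
so $\operatorname{tr}(M^3)$ reduces to a monotone triple sum, which I aim to bound by the cube of the theorem's right-hand side. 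The main obstacle is making this reduction sharp: naive estimates such as $M_{ij}\le\sqrt{M_{ii}M_{jj}}$ merely reproduce the Frobenius bound, so the gain must come from precisely accounting for the telescoped ratios $h_a/h_c$ via the closed forms for $\sum_j j^r h_j$---organised, for instance, by summing first over the median index $b$ and then bounding the remaining double sum---in order to produce the precise numerator $(n+1)\bigl(n+\tfrac{2(\alpha+1)}{5}\bigr)$ together with the factor $\bigl((\alpha+3)(\alpha+5)\bigr)^{1/3}$.
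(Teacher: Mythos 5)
Your matrix set-up is correct and I could verify all of its ingredients: with $h_k=\Gamma(k+\alpha+1)/k!$ the identities $\tfrac{d}{dt}L_k^{(\alpha)}=-L_{k-1}^{(\alpha+1)}=-\sum_{j<k}L_j^{(\alpha)}$ and $\sum_{\ell=0}^{m-1}h_\ell=mh_m/(\alpha+1)$ do give $M_{ij}=\frac{\min(i,j)}{\alpha+1}\sqrt{h_{\min}/h_{\max}}$, $\operatorname{tr}(M)=\frac{n(n+1)}{2(\alpha+1)}$, and the telescoped triple product $M_{ij}M_{jk}M_{ki}=\frac{a^2b}{(\alpha+1)^3}\,h_a/h_c$. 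Moreover your upper-bound device $\lambda_{\max}(M)^3\le\operatorname{tr}(M^3)$ is mathematically the same inequality the paper uses: $\operatorname{tr}(M^3)$ is the third power sum of the zeros of the characteristic polynomial of $M$, and the paper reaches exactly this quantity as $b_1^3-3b_1b_2+3b_3$ via Newton's identities, computing $b_1,b_2,b_3$ by induction on the three-term recurrence for D\"orfler's orthogonal polynomial $Q_n$ rather than by summing over index triples. So, if completed, your upper bound would coincide with the paper's.

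The proposal as written, however, has two genuine gaps. First, for the upper bound everything after the triple-product formula is only announced: you do not evaluate the triple sum in closed form, and you do not verify that the result lies below $\bigl[(n+1)(n+\tfrac{2(\alpha+1)}{5})\bigr]^3/\bigl[(\alpha+1)^3(\alpha+3)(\alpha+5)\bigr]$. That verification is where essentially all of the work lies --- in the paper it amounts to showing positivity for all $\alpha>-1$, $n\ge 2$ of a quintic in $n$ whose coefficients $\nu_j(\alpha)$ are \emph{not} all positive, so a nontrivial regrouping argument is needed; ``I aim to bound'' does not discharge this. Second, and more seriously, the lower bound is only a plan: you never exhibit the trial polynomial $\varphi$, and it is not at all clear that a Rayleigh quotient at a low-degree polynomial vector reaches the constant $2/\bigl((\alpha+1)(\alpha+5)\bigr)$ (the whole point of the theorem is to beat D\"orfler's $1/\bigl((\alpha+1)(\alpha+3)\bigr)$ by a factor of $2$ in the leading term, so the choice of trial vector is exactly the delicate step). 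The paper avoids any eigenvector guess by using the explicit, computable ratio $\operatorname{tr}(M^3)/\operatorname{tr}(M^2)=(b_1^3-3b_1b_2+3b_3)/(b_1^2-2b_2)\le\lambda_{\max}(M)$, which follows from the same power-sum computation already needed for the upper bound; you may want to replace your trial-vector step by this inequality, after which both bounds reduce to the (lengthy but finite) polynomial verifications in $n$ and $\alpha$ that the paper carries out in Section~\ref{s4}. Note also that the hypothesis $n>(\alpha+1)/6$ is needed only to make the stated lower bound meaningful; it should emerge from the comparison of rational functions, not from positivity of your Rayleigh quotient.
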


For $n=1,\,2$, the exact values are readily computable:
$$
\big[c_1(\alpha)\big]^2=\frac{1}{1+\alpha}\,,\qquad
\big[c_2(\alpha)\big]^2=\frac{3(\alpha+2)+\sqrt{(\alpha+2)(\alpha+10)}}
{2(\alpha+1)(\alpha+2)}\,.
$$
Compared to D\"orfler's result \eqref{e4}, we improve the lower
bound for $c_n(\alpha)$ by the factor of $\sqrt{2}$, and obtain for
the upper bound the order $\,O(n/\alpha^{5/6}\,)$ instead of
$\,O(n/\alpha^{1/2}\,)$.

As an immediate consequence of Theorem \ref{t2} we obtain the
following
\begin{corollary}\label{c1}
The asymptotic Markov constant
$c(\alpha)=\lim_{n\rightarrow\infty}\{n^{-1}\,c_n(\alpha)\}$
satisfies the inequalities
\begin{equation}\label{e8}
\underline{c}(\alpha):=\frac{\sqrt{2}}{\sqrt{(\alpha+1)(\alpha+5)}}\leq
c(\alpha)\leq
\frac{1}{\sqrt{\alpha+1}\sqrt[6]{(\alpha+3)(\alpha+5)}}=:\overline{c}(\alpha)\,.
\end{equation}
\end{corollary}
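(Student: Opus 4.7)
The plan is to derive Corollary~\ref{c1} directly from Theorem~\ref{t2}. I do not need to establish the existence of the limit $c(\alpha)=\lim_{n\to\infty}c_n(\alpha)/n$, since D\"orfler's identity \eqref{e5} already guarantees it; everything else is just dividing the two-sided bound of Theorem~\ref{t2} by $n^2$ and letting $n\to\infty$.

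First I would rewrite each bounding factor using
$$
\frac{(n+a)(n+b)}{n^2}=\Big(1+\tfrac{a}{n}\Big)\Big(1+\tfrac{b}{n}\Big),
$$
where $a,b$ depend only on $\alpha$, so that each such product tends to $1$ as $n\to\infty$. Applying this to the lower bound of Theorem~\ref{t2} gives the limit $2/[(\alpha+1)(\alpha+5)]$, and applying it to the upper bound gives $1/[(\alpha+1)((\alpha+3)(\alpha+5))^{1/3}]$.

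Passing to the limit in the chain of strict inequalities of Theorem~\ref{t2} then yields
$$
\frac{2}{(\alpha+1)(\alpha+5)}\;\leq\;[c(\alpha)]^{2}\;\leq\;\frac{1}{(\alpha+1)\big((\alpha+3)(\alpha+5)\big)^{1/3}},
$$
and extracting the positive square root on both sides reproduces exactly the inequalities $\underline{c}(\alpha)\leq c(\alpha)\leq\overline{c}(\alpha)$ claimed in \eqref{e8}.

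Since the corollary is labelled \emph{immediate}, there is essentially no real obstacle; the only nuances worth flagging are that the strict inequalities valid for each finite $n\geq 3$ in Theorem~\ref{t2} only survive as weak inequalities in the limit (which is indeed how Corollary~\ref{c1} is phrased), and that the auxiliary assumption $n>(\alpha+1)/6$ needed for the lower bound of Theorem~\ref{t2} is automatically satisfied for all sufficiently large $n$, so it is harmless when sending $n\to\infty$.
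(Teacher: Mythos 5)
Your proposal is correct and is exactly the argument the paper intends: the corollary is stated as an immediate consequence of Theorem~\ref{t2}, obtained by dividing the two-sided bound by $n^2$, letting $n\to\infty$ (with existence of the limit supplied by D\"orfler's result \eqref{e5}), and taking square roots. Your remarks about strict inequalities becoming weak in the limit and the condition $n>(\alpha+1)/6$ being eventually satisfied are accurate and cover the only minor points of care.
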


Let us comment now on the bounds for $c(\alpha)$ given by Corollary
\ref{c1}. First of all,
$$
\lim_{\mathop{}^{\alpha\rightarrow -1}_{\alpha>-1}}\,
\frac{\overline{c}(\alpha)}{\underline{c}(\alpha)}=1\,,
$$
which indicates that for small $\alpha$ our bounds are pretty tight.
In particular, in the case $\alpha=0$, when we have $\,c(0)=2/\pi\,$
(see \eqref{e3}), the relative errors satisfy
$$
\frac{c(0)}{\underline{c}(0)}=\frac{\sqrt{10}}{\pi}<1.006585\,,\qquad
\frac{\overline{c}(0)}{c(0)}=\frac{\pi}{2\sqrt[6]{15}}<1.000242\,.
$$

Second, Corollary \ref{c1} gives rise to the question: what is the
right order of $\alpha$ in $c(\alpha)$ as $\alpha\rightarrow\infty$
?  The answer follows below:

\begin{theorem} \label{t3}
For the asymptotic Markov constant $c(\alpha)$ we have
$c(\alpha)=O(\alpha^{-1})$ as $\alpha\rightarrow\infty$\,. More
precisely, $\,c(\alpha)\,$ satisfies the inequalities
\begin{equation}\label{e9}
\frac{\sqrt{2}}{\sqrt{(\alpha+1)(\alpha+5)}}<c(\alpha)<\frac{2}{\alpha+2\pi-2}\,,
\quad\alpha>1\,.
\end{equation}
\end{theorem}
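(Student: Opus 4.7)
The left-hand side of~\eqref{e9} is precisely the lower bound already supplied by Corollary~\ref{c1}, so no further work is needed there. The entire substance of Theorem~\ref{t3} lies in the upper bound. The strategy is to reduce it to a single Bessel-zero inequality via D\"orfler's identity~\eqref{e5}: with $\nu:=(\alpha-1)/2$,
$$
c(\alpha)=\frac{1}{j_{\nu,1}},
$$
so that the claim $c(\alpha)<2/(\alpha+2\pi-2)$ is algebraically equivalent to
$$
j_{\nu,1}\;\ge\;\nu+\pi-\tfrac12.
$$
The constants are calibrated so that equality holds at the reference value $\nu=\tfrac12$ (i.e.\ $\alpha=2$): there the explicit formula $J_{1/2}(x)=\sqrt{2/(\pi x)}\sin x$ gives $j_{1/2,1}=\pi$ exactly, matching $\nu+\pi-\tfrac12=\pi$.

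Because the target estimate is tight at the base point $\nu=\tfrac12$, the natural route is to prove that $\nu\mapsto j_{\nu,1}-\nu$ is non-decreasing on $[\tfrac12,\infty)$, equivalently $\tfrac{d}{d\nu}j_{\nu,1}\ge 1$; integrating from $j_{1/2,1}=\pi$ then gives the claim with strictness for $\nu>\tfrac12$. Two classical ingredients produce this monotonicity: the concavity of $\nu\mapsto j_{\nu,1}$ on $(-1,\infty)$ (going back to Watson, with later refinements by Lorch, Elbert and Laforgia) and the McMahon-type asymptotic $j_{\nu,1}=\nu+a_1\nu^{1/3}+O(\nu^{-1/3})$, which forces $\tfrac{d}{d\nu}j_{\nu,1}\to 1$ as $\nu\to\infty$. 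A non-increasing function that tends to $1$ at infinity is bounded below by $1$ everywhere, so these two facts together yield $\tfrac{d}{d\nu}j_{\nu,1}\ge 1$ in the required range.

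The main obstacle is to avoid quoting the concavity theorem as a black box. A self-contained route is to pass to the normal form $v(x):=\sqrt{x}\,J_\nu(x)$, which satisfies
$$
v''(x)+\Bigl(1-\frac{\nu^2-\tfrac14}{x^2}\Bigr)v(x)=0,\qquad v(0)=v(j_{\nu,1})=0,
$$
and compare with the candidate $u(x):=\sin\bigl(x-(\nu-\tfrac12)\bigr)$, whose consecutive positive zeros are placed precisely at $\nu-\tfrac12$ and $\nu+\pi-\tfrac12$. Tracking the Wronskian
$$
W(x):=u(x)v'(x)-v(x)u'(x),\qquad W'(x)=\frac{\nu^2-\tfrac14}{x^2}\,u(x)\,v(x),
$$
one would like to rule out a zero of $v$ inside $(0,\nu+\pi-\tfrac12)$. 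The difficulty is that $uv$ changes sign on this interval, so a plain Sturm comparison only yields the weaker bound $j_{\nu,1}\ge\pi$; the sharp shift by $\nu-\tfrac12$ has to be extracted by anchoring at the exact case $\nu=\tfrac12$ (where $v$ is literally a sine wave) and then monitoring how the Pr\"ufer phase of $v$ depends on $\nu$, which is the delicate step.
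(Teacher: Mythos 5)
Your reduction is exactly the paper's: the left inequality in \eqref{e9} is just Corollary \ref{c1}, and via D\"orfler's identity \eqref{e5} the right inequality is equivalent to the Bessel-zero bound $j_{\nu,1}\ge \nu+\pi-\tfrac12$ with $\nu=(\alpha-1)/2$. At that point the paper simply cites this bound (Ifantis--Siafarikas, via eqn.~(1.6) of Elbert's survey) and stops. You instead set out to prove it, and this is where the proposal falls short of being a proof. Your ``self-contained'' route --- Sturm comparison of $v=\sqrt{x}\,J_\nu(x)$ against the shifted sine --- is abandoned at precisely the point where it matters: you concede that the Wronskian argument only yields $j_{\nu,1}\ge\pi$ and that extracting the shift by $\nu-\tfrac12$ requires ``monitoring the Pr\"ufer phase'', which you do not carry out. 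Your fallback route (concavity of $\nu\mapsto j_{\nu,1}$ plus the $\nu+O(\nu^{1/3})$ asymptotics, giving $\tfrac{d}{d\nu}j_{\nu,1}\ge 1$ and integrating from $j_{1/2,1}=\pi$) is logically sound, but it rests on the Elbert--Laforgia concavity theorem, which is no more elementary than the inequality the paper cites; if quoting classical Bessel facts is acceptable, one may as well quote Ifantis--Siafarikas directly, as the paper does. As written, the upper bound is not established.

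There is a second, more concrete problem concerning the range. Anchoring at $\nu=\tfrac12$ and using $\tfrac{d}{d\nu}j_{\nu,1}\ge1$ gives $j_{\nu,1}\ge\nu+\pi-\tfrac12$ only for $\nu\ge\tfrac12$, i.e.\ $\alpha\ge2$, with equality (not strict inequality) at $\alpha=2$; and since $j_{\nu,1}-\nu$ is increasing under your hypotheses, the same calibration shows the inequality \emph{reverses} for $0<\nu<\tfrac12$ (indeed $j_{0,1}=2.4048\ldots<\pi-\tfrac12=2.6415\ldots$). So your argument cannot reach the stated range $\alpha>1$ --- and in fact the right-hand inequality in \eqref{e9} as printed appears to fail for $1<\alpha\le 2$, a discrepancy that your own setup makes visible and that you should have flagged rather than passed over. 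A correct completion would either restrict the claim to $\alpha>2$ or replace the linear minorant of $j_{\nu,1}$ by one that remains valid down to $\nu=0$.
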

\begin{proof}
The lower bound for $\,c(\alpha)\,$ is simply
$\underline{c}(\alpha)$ (in fact, the left-hand inequality in
\eqref{e9} holds for all $\alpha>-1$). For the right-hand inequality
in \eqref{e9}, we recall that, by D\"{o}rfler's result \eqref{e5},
$\,c(\alpha)=\big[j_{(\alpha-1)/2,1}\big]^{-1}$, with $j_{\nu,1}$
being the first positive zero of the Bessel function $J_{\nu}(z)$\,.
On using some lower bounds for the zeros of Bessel functions,
obtained by Ifantis and Siafarikas \cite{IS1985} (see \cite[eqn.
(1.6)]{AE2001}), we get
$$
\frac{1}{j_{(\alpha-1)/2,1}}<\frac{2}{\alpha+2\pi-2}\,,\quad
\alpha>1\,.
$$
The inequalities in \eqref{e9} imply that
$\,c(\alpha)=O(\alpha^{-1})\,$ as $\,\alpha\rightarrow\infty$\,.
\qed
\end{proof}

Notice that the lower bound $\underline{c}(\alpha)$ has the right
order with respect to $\alpha$ as $\alpha\rightarrow\infty$.
Moreover, from \eqref{e9} it follows that, roughly, this lower bound
can only be improved by a factor of at most $\sqrt{2}$.

\begin{figure}[h]
\sidecaption
\includegraphics[scale=0.9]{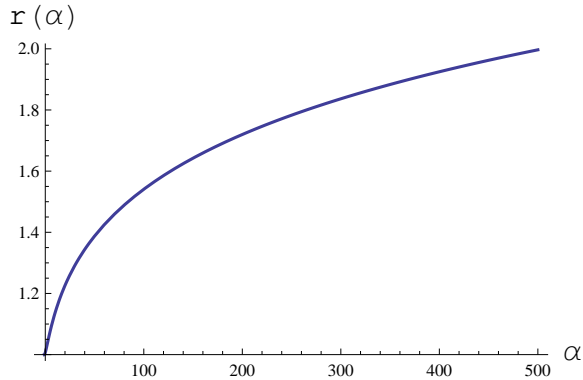}
\caption{The graph of the ratio
$r(\alpha)=\displaystyle{\frac{\overline{c}(\alpha)}{\underline{c}(\alpha)}}$.}
\label{f1}
\end{figure}

The upper bound $\overline{c}(\alpha)$ does not exhibit the right
asymptotic of $c(\alpha)$ as $\alpha\rightarrow\infty$.
Nevertheless, $\overline{c}(\alpha)$ is less than the upper bound in
\eqref{e9} for $\alpha\in [2.045,47.762]$. Moreover, the ratio
$r(\alpha)=\overline{c}(\alpha)/\underline{c}(\alpha)$ tends to
infinity as $\alpha\rightarrow\infty$ rather slowly; for instance,
$\,r(\alpha)\,$ is less than two for $\,-1<\alpha<500\,$ (see Fig.
1).\smallskip

Finally, in view of \eqref{e5}, Corollary \ref{c1} provides bounds
for $j_{\nu,1}$, the first positive zero of the Bessel function
$J_{\nu}$, which, for some particular values of $\nu$, are better
than some of the bounds known in the literature (e.g., the lower
bound below is better than the one given in \cite[eqn.
(1.6)]{AE2001} for $\nu\in [0.53, 23.38]$).
\begin{corollary}\label{c2} The first positive zero $\,j_{\nu,1\,}$ of
the Bessel function $\,J_{\nu}$, $\nu>-1$, satisfies the
inequalities
$$
2^{\frac{5}{6}}\,\sqrt{\nu+1}\,\sqrt[6]{(\nu+2)(\nu+3)}<j_{\nu,1}
<\sqrt{2(\nu+1)(\nu+3)}\,.
$$
\end{corollary}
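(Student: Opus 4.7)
The plan is to read off Corollary \ref{c2} as a direct corollary of Corollary \ref{c1} once D\"orfler's identity $c(\alpha)=1/j_{(\alpha-1)/2,1}$ from \eqref{e5} is invoked. Inverting the two-sided bound of Corollary \ref{c1} gives
$$
\sqrt{\alpha+1}\,\sqrt[6]{(\alpha+3)(\alpha+5)}\;\le\;j_{(\alpha-1)/2,1}\;\le\;\sqrt{\tfrac{1}{2}(\alpha+1)(\alpha+5)}\,,
$$
so everything will follow from the change of parameter $\nu=(\alpha-1)/2$, i.e.\ $\alpha=2\nu+1$.

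Under this substitution one has the clean identities $\alpha+1=2(\nu+1)$, $\alpha+3=2(\nu+2)$ and $\alpha+5=2(\nu+3)$, and the hypothesis $\nu>-1$ matches exactly the range $\alpha>-1$ under which Corollary \ref{c1} was established. Plugging these into the lower bound yields
$$
\sqrt{2(\nu+1)}\,\sqrt[6]{4(\nu+2)(\nu+3)}\;=\;2^{1/2}\cdot 2^{1/3}\,\sqrt{\nu+1}\,\sqrt[6]{(\nu+2)(\nu+3)}\;=\;2^{5/6}\sqrt{\nu+1}\,\sqrt[6]{(\nu+2)(\nu+3)}\,,
$$
while plugging into the upper bound gives $\sqrt{\tfrac{1}{2}\cdot 2(\nu+1)\cdot 2(\nu+3)}=\sqrt{2(\nu+1)(\nu+3)}$, which are the two expressions in the statement.

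There is really no obstacle to discuss, since Corollary \ref{c2} is a pure rewriting of Corollary \ref{c1} via the linear change of variable supplied by D\"orfler's formula. The only point that requires a moment of arithmetical attention is the bookkeeping of the powers of $2$ inside the sixth root on the lower side, where one must recognize that $2^{1/2}\cdot 2^{2/6}=2^{5/6}$; the upper side is even simpler, as the factor $\tfrac12$ eliminates one factor of $2$ from the numerator.
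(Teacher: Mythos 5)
Your proof is correct and is exactly the argument the paper intends: the paper itself presents Corollary \ref{c2} as an immediate consequence of Corollary \ref{c1} via D\"orfler's identity \eqref{e5}, and your substitution $\alpha=2\nu+1$ with the identities $\alpha+1=2(\nu+1)$, $\alpha+3=2(\nu+2)$, $\alpha+5=2(\nu+3)$ and the power-of-two bookkeeping $2^{1/2}\cdot 2^{1/3}=2^{5/6}$ is the verification the paper leaves to the reader. Nothing is missing.
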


The rest of the paper is organized as follows. In Sect. \ref{s2} we
present some preliminary facts, which are needed for the proof of
Theorem \ref{t2}. In Sect. \ref{s2.1} we quote a known relation
between the best Markov constant $c_{n}(\alpha)$ and the smallest
(positive) zero of a polynomial $Q_n(x)=Q_n(x,\alpha)$ of degree
$n$, defined by a three-term recurrent relation. By this definition,
$Q_n$ is identified as an orthogonal polynomial with respect to a
measure supported on $\mathbb{R}_{+}$. In Sect. \ref{s2.2} we give
lower and upper bounds for the largest zero of a polynomial, which
has only positive zeros, in terms of a few of its highest degree
coefficients. In Sect. 3 we prove formulae for the four lowest
degree coefficients of the polynomial $Q_n$. The proof of our main
result, Theorem~\ref{t2}, is given in Sect. \ref{s4}. As the proof
involves some lengthy tough straightforward calculations, for
performing part of them we have used the assistance of a computer
algebra system. Section \ref{s5} contains some final remarks.
\section{Preliminaries}\label{s2}
In this section we quote some known facts, and prove some results
which will be needed for the proof of Theorem \ref{t2}.

\subsection{A Relation Between \boldmath{$c_n(\alpha)$} and an Orthogonal
Polynomial}\label{s2.1}

As was already said in the introduction, the best constant in a
$L_2$ Markov inequality for polynomials of degree not exceeding $n$
is equal to the largest singular value of a certain $n\times n$
matrix, say $\mathbf{A}_n$. The latter is equal to a square root of
the largest eigenvalue of $\mathbf{A}_n^{\top}\mathbf{A}_n$ (or
$\Vert \mathbf{A}_n\Vert_2$, the second matrix norm of
$\mathbf{A}_n$). However, finding explicitly $\Vert
\mathbf{A}_n\Vert_2$ (and for all $n\in \mathbb{N}$) is a fairly
difficult task, and this explains the lack of many results on the
sharp constants in the $L_2$ Markov inequalities. To avoid this
difficulty, some authors simply try to estimate $\Vert
\mathbf{A}_n\Vert_2$, or use other matrix norms, e.g., $\,\Vert
\mathbf{A}_n\Vert_{\infty}$, the Frobenius norm, etc.\smallskip

Our approach to the proof of Theorem \ref{t2} makes use of the
following theorem:
\begin{theorem}[\textbf{\cite[p. 85]{PD2002}}] \label{t4}
The quantity $\,1/[c_n(\alpha)]^2\,$ is equal to the smallest zero
of the polynomial $\,Q_n(x)=Q_n(x,\alpha)$, which is defined
recursively by
\begin{eqnarray*}
&&Q_{n+1}(x)=(x-d_n)Q_n(x)-\lambda_n^2 Q_{n-1}(x),\quad n\ge 0\,;\\
&&Q_{-1}(x):=0,\ \ Q_0(x):=1\,; \\
&&d_0:=1+\alpha,\ \ d_n:=2+\frac{\alpha}{n+1}\,,\quad n\ge 1\,;\\
&&\lambda_0>0\ \ \text{{\rm arbitrary}},\
\lambda_n^2:=1+\frac{\alpha}{n}\,\quad n\ge 1\,.
\end{eqnarray*}
\end{theorem}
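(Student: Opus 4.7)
The plan is to show that $[c_n(\alpha)]^{-2}$ is the smallest eigenvalue of an explicit $n\times n$ symmetric tridiagonal (Jacobi) matrix $B$ whose characteristic polynomial satisfies precisely the three-term recurrence defining $Q_n$. The first step is to transfer the extremal problem from $\PP_n$ to $\PP_{n-1}$. For $p \in \PP_n$, write $p(t) = p(0) + R(t)$ with $R(t) = \int_0^t r(s)\,ds$ and $r = p' \in \PP_{n-1}$; the scalar $p(0)$ is free, and minimizing $\|p\|_{w_\alpha}^2$ over it at fixed $r$ yields
$$
[c_n(\alpha)]^{-2} \;=\; \min_{r \in \PP_{n-1},\,r \neq 0}\, \frac{\|R\|_{w_\alpha}^2 \;-\; \langle R,1\rangle_{w_\alpha}^2\big/\|1\|_{w_\alpha}^2}{\|r\|_{w_\alpha}^2}.
$$
Expanding $r = \sum_{k=0}^{n-1} a_k \ell_k$ in the orthonormal Laguerre basis $\ell_k := \ell_k^{(\alpha)}$ makes the denominator equal to $\sum a_k^2$, while the numerator becomes $\sum_{j,k} a_j a_k B_{jk}$ with $B_{jk} := \langle \tilde\phi_j, \tilde\phi_k\rangle_{w_\alpha}$, where $\tilde\phi_k(t) := \int_0^t \ell_k(s)\,ds - \mu_k$ and $\mu_k$ is the scalar making $\langle \tilde\phi_k, 1\rangle_{w_\alpha} = 0$. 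Hence $[c_n(\alpha)]^{-2}$ equals the smallest eigenvalue of the symmetric matrix $B = (B_{jk})_{j,k=0}^{n-1}$.

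The heart of the argument is an explicit computation of $\tilde\phi_k$. Using the classical Laguerre identities $L_k^{(\alpha)} = L_k^{(\alpha+1)} - L_{k-1}^{(\alpha+1)}$ and $(L_k^{(\alpha)})'(t) = -L_{k-1}^{(\alpha+1)}(t)$, one derives
$$
\int_0^t L_k^{(\alpha)}(s)\,ds \;=\; L_k^{(\alpha)}(t) - L_{k+1}^{(\alpha)}(t) + L_{k+1}^{(\alpha)}(0) - L_k^{(\alpha)}(0).
$$
Passing to the orthonormal basis realizes $\int_0^t \ell_k$ as a linear combination of $\ell_0$, $\ell_k$, $\ell_{k+1}$; subtracting $\mu_k$ eliminates precisely the $\ell_0$-component, yielding the remarkably clean formulas
$$
\tilde\phi_0 \;=\; -\sqrt{\alpha+1}\,\ell_1, \qquad \tilde\phi_k \;=\; \ell_k - \sqrt{\tfrac{k+\alpha+1}{k+1}}\,\ell_{k+1} \quad (k \geq 1).
$$
Since each $\tilde\phi_k$ is supported on at most two consecutive orthonormal basis vectors, the Gram matrix $B$ is manifestly tridiagonal.

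A direct calculation of the remaining inner products then gives $B_{00} = \alpha+1 = d_0$, $B_{kk} = 1 + (k+\alpha+1)/(k+1) = 2 + \alpha/(k+1) = d_k$ for $k \geq 1$, and $|B_{k,k+1}|^2 = (k+\alpha+1)/(k+1) = 1 + \alpha/(k+1) = \lambda_{k+1}^2$; the signs of the off-diagonals are irrelevant since a diagonal similarity by $\pm 1$ absorbs them. Thus $B$ is the $n\times n$ symmetric Jacobi matrix associated with the parameters $(d_0,\ldots,d_{n-1};\,\lambda_1,\ldots,\lambda_{n-1})$. Expanding $\det(xI - B_k)$ along the last row for each $k\times k$ leading principal minor produces exactly the recurrence $P_{k+1}(x) = (x-d_k)P_k(x) - \lambda_k^2 P_{k-1}(x)$ with $P_0 = 1$ and $P_1 = x - d_0$, which identifies $P_n$ with $Q_n$ by induction. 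Therefore $[c_n(\alpha)]^{-2}$ is the smallest zero of $Q_n$. I expect the main obstacle to be the second-step bookkeeping: one must verify that subtracting the $w_\alpha$-mean of $\int_0^t \ell_k(s)\,ds$ cancels the constant term \emph{exactly}, since any residual component along $\ell_j$ with $j < k$ would destroy the tridiagonal structure and the identification of $B$ with the Jacobi matrix of $Q_n$.
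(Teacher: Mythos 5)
Your proposal is correct, and every step checks out: the reduction of $[c_n(\alpha)]^{-2}$ to the smallest eigenvalue of the Gram matrix $B=(\langle\tilde\phi_j,\tilde\phi_k\rangle_{w_\alpha})$, the integral identity for $\int_0^t L_k^{(\alpha)}$, the resulting two-term formulas for $\tilde\phi_k$ (I verified $\|\tilde\phi_0\|^2=\alpha+1$, $\|\tilde\phi_k\|^2=2+\alpha/(k+1)$, $|\langle\tilde\phi_k,\tilde\phi_{k+1}\rangle|^2=1+\alpha/(k+1)$), and the identification of $\det(xI-B)$ with $Q_n$ via the minor expansion. Note, however, that the paper itself gives no proof of this statement --- it is quoted verbatim from D\"orfler \cite{PD2002} --- so there is nothing internal to compare against; your argument is essentially a self-contained reconstruction of the singular-value/Jacobi-matrix characterization that the paper only alludes to in Sect.~\ref{s2.1}, and the worry you flag at the end (a residual component along some $\ell_j$, $j<k$) is already fully disposed of by your explicit antiderivative formula, which shows the only low-order contamination is the constant term that $\mu_k$ removes.
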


By Favard's theorem, for any $\alpha>-1$,
$\{Q_n(x,\alpha)\}_{n=0}^{\infty}$ form a system of monic orthogonal
polynomials, and, in addition, we know that the support of their
ortogonality measure is in $\mathbb{R}_{+}$. Theorem \ref{t4}
transforms the problem of finding or estimating $c_n(\alpha)$ to a
problem for finding or estimating the extreme zeros of orthogonal
polynomials, or, equivalently, the extreme eigenvalues of certain
tri-diagonal (Jacobi) matrices. For the latter problem one can apply
numerous powerful methods such as the Gershgorin circles, the ovals
of Cassini, etc. For more details on this kind of methods we refer
the reader to the excellent paper of van Doorn \cite{EvD1987}.

However, we choose  here a different approach for estimating the
smallest positive zero of $\,Q_{n}(x,\alpha)$, which seems to be
efficient, too.

\subsection{Bounds for the Largest Zero of a Polynomial Having Only Positive Roots}
\label{s2.2}

In view of Theorem \ref{t4}, we need to estimate the smallest
(positive) zero of the polynomial $Q_{n}(x,\alpha)$. On using the
three-term recurrence relation for $\{Q_{m}\}_{m=0}^{\infty}$, we
can evaluate (at least theoretically) as many coefficients of
$Q_n(x)$ as we wish (and thus coefficients of the reciprocal
polynomial $x^{n}\,Q_n(x^{-1})$, too). Our proof of Theorem \ref{t2}
makes use of the following statement.

\begin{proposition}\label{p1}
Let
$P(x)=x^n-b_{1}\,x^{n-1}+b_{2}\,x^{n-2}-\cdots+(-1)^{n-1}b_{n-1}\,x+(-1)^{n}b_n$
be a polynomial with positive roots $\,x_1\le x_2\le\cdots\le
x_n$\,. Then the largest zero $x_n$ of $P$ satisfies the
inequalities:
\begin{enumerate}[~(i)~~~~]
\item $\displaystyle{\frac{b_1}{n}\leq x_n< b_1}\,$;
\item $\displaystyle{b_1-2\,\frac{b_2}{b_1}\leq x_n< (b_1^2-2b_2)^{\frac{1}{2}}}\,$;
\item $\displaystyle{\frac{b_1^{3}-3b_1\,b_2+3b_3}{b_1^2-2b_2}\leq
x_n < (b_1^{3}-3b_1\,b_2+b_3)^{\frac{1}{3}}}$\,.
\end{enumerate}
\end{proposition}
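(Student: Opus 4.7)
The plan is to reduce everything to identities relating elementary symmetric functions to power sums. Writing $b_k=e_k(x_1,\ldots,x_n)$ by Vieta, Newton's identities give
$$
p_1 = b_1,\qquad p_2 = b_1^2 - 2b_2,\qquad p_3 = b_1^3 - 3b_1 b_2 + 3 b_3,
$$
where $p_k:=\sum_{i=1}^n x_i^k$. Once these identifications are in hand, each of (i)--(iii) reduces to an elementary inequality between $x_n$, $p_{k-1}$, and $p_k$.

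For the upper bounds, I would use the immediate estimate $x_n^k<\sum_i x_i^k=p_k$, strict whenever $n\ge 2$ because some other positive $x_i^k$ contributes. Taking $k=1,2,3$ gives $x_n<b_1$, $x_n<(b_1^2-2b_2)^{1/2}$, and $x_n<(b_1^3-3b_1 b_2+3b_3)^{1/3}$, which match the stated upper bounds of (i) and (ii) verbatim. The stated upper bound of (iii) prints ``$+b_3$'' where Newton's computation produces ``$+3b_3$''; the two agree only when $n\le 2$ (so $b_3=0$). For $n\ge 3$ the as-written form is strictly smaller, and a direct check on $P(x)=(x-1)^2(x-100)$, for which $b_1=102$, $b_2=201$, $b_3=100$, gives $b_1^3-3b_1 b_2+b_3=999\,802<10^6=x_n^3$, so the literally-stated bound is not valid. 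I would therefore flag ``$+b_3$'' as a typographical slip for ``$+3b_3$'' and certify the resulting (correct) $p_3^{1/3}$ bound from the one-line estimate $x_n^3<p_3$ above.

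For the lower bounds, I would write
$$
\frac{p_k}{p_{k-1}} \;=\; \sum_{i=1}^n x_i\cdot\frac{x_i^{k-1}}{p_{k-1}},
$$
which exhibits $p_k/p_{k-1}$ as a convex combination of the roots $x_i$ with positive weights summing to $1$, so $p_k/p_{k-1}\le x_n$. Substituting $k=2,3$ and expanding via Newton's identities returns exactly the stated lower bounds $b_1-2b_2/b_1\le x_n$ in (ii) and $(b_1^3-3b_1 b_2+3b_3)/(b_1^2-2b_2)\le x_n$ in (iii). The case $k=1$ is the standard AM-$\le$-max inequality $b_1/n\le x_n$.

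No serious analytic machinery is required: the whole argument amounts to Newton's identities plus the two elementary observations $x_n^k<p_k$ and $p_k/p_{k-1}\in\mathrm{conv}\{x_i\}$. The only real obstacle is the typographical discrepancy in the upper bound of (iii) flagged above, which I would address by replacing ``$+b_3$'' with ``$+3b_3$''.
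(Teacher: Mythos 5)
Your proof is correct and follows essentially the same route as the paper: Newton's identities to get the power sums $p_2=b_1^2-2b_2$ and $p_3=b_1^3-3b_1b_2+3b_3$, then $p_k/p_{k-1}\le x_n < p_k^{1/k}$ for the lower and upper bounds. You are also right that the ``$+b_3$'' in the stated upper bound of (iii) is a typographical slip for ``$+3b_3$'': the paper's own proof displays $(b_1^3-3b_1b_2+3b_3)^{1/3}$, and the later application in Proposition~6 uses that corrected form, so your counterexample $(x-1)^2(x-100)$ correctly pins down the misprint rather than a flaw in the result.
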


\begin{proof}
Part (i) follows trivially from
$$
\frac{b_1}{n}=\frac{x_1+x_2+\cdots+x_n}{n}\leq x_n<
x_1+x_2+\cdots+x_n =b_1\,.
$$
For the proof of parts (ii) and (iii) we make use of Newton's
identities to obtain
$$
x_1^2+x_2^2+\cdots+x_n^2=b_1^2-2b_2,\qquad
x_1^3+x_2^3+\cdots+x_n^3=b_1^3-3b_1\,b_2+3b_3\,.
$$
Now (ii) follows from
$$
\frac{b_1^2-2b_2}{b_1}=\frac{x_1^2+x_2^2+\cdots+x_n^2}{x_1+x_2+\cdots+x_n}\leq
x_n<
(x_1^2+x_2^2+\cdots+x_n^2)^{\frac{1}{2}}=(b_1^2-2b_2)^{\frac{1}{2}}
$$
and (iii) follows from
$$
\frac{b_1^3-3b_1\,b_2+3b_3}{b_1^2-2b_2}=\frac{x_1^3+\cdots+x_n^3}{x_1^2+\cdots+x_n^2}\le
x_n<
(x_1^3+\cdots+x_n^3)^{\frac{1}{3}}=(b_1^3-3b_1\,b_2+3b_3)^{\frac{1}{3}}\,.
$$

It is clear from the proof that the lower bounds for $x_n$ are
attained only when $x_1=x_2=\cdots=x_n$. \qed
\end{proof}

\section{The Lowest Degree Coefficients of the Polynomial \boldmath{$Q_{n,\alpha}$}}
\label{s3}

Let us denote by  $a_{k,n}=a_{k,n}(\alpha)$, $k=0,\ldots,n$, the
coefficients of the monic polynomial $Q_n(x)=Q_n(x,\alpha)$,
introduced in Theorem \ref{t4}, i.e.,
$$
Q_n(x)=Q_n(x,\alpha)=x^n+a_{n-1,n}\,x^{n-2}+\cdots+a_{3,n}\,x^3+a_{2,n}\,x^2
+a_{1,n}\,x+a_{0,n}\,.
$$

For the sake of convenience, we set $a_{m,m}=1$, $m\geq 0$, and
$$
a_{k,m}=0\,,\ \text{ if }\ k<0\ \text{ or }\ k>m\,.
$$
From the recursive definition of $Q_n$ we have $\,Q_0(x)=1\,$,
$\,Q_1(x)=x-\alpha-1\,$, thus
$$
a_{0,1}=-\alpha-1\,,
$$
and for $n\in \mathbb{N}$ we obtain a recurrence relations for the
coefficients of $Q_{n-1}$, $Q_n$ and $Q_{n+1}$\,:
\begin{equation}\label{e10}
a_{k,n+1}=a_{k-1,n}-\Big(2+\frac{\alpha}{n+1}\Big)\,a_{k,n}
-\Big(1+\frac{\alpha}{n}\Big)a_{k,n-1}\,,\quad k=0,\ldots,n\,.
\end{equation}

Now recurrence relation \eqref{e10} will be used of proving
consecutively formulae for the coefficients $\,a_{k,n}\,$, $\,0\leq
k\leq 3$.

\begin{proposition}\label{p2}
For all $\,n\in \mathbb{N}_0$, the coefficient $\,a_{0,n}\,$ of the
polynomial $\,Q_n\,$ is given by
$$
a_{0,n}=(-1)^{n}\,\prod_{k=1}^{n}\Big(1+\frac{\alpha}{k}\Big)\,.
$$
\end{proposition}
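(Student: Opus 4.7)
The plan is a direct induction on $n$, using the three-term recurrence of Theorem~\ref{t4} evaluated at $x=0$. The key observation is that $a_{0,n}=Q_n(0)$, so the problem reduces to evaluating the orthogonal polynomials at the origin. Setting $x=0$ in $Q_{n+1}(x)=(x-d_n)Q_n(x)-\lambda_n^2 Q_{n-1}(x)$ gives the scalar recurrence
$$
a_{0,n+1}=-d_n\,a_{0,n}-\lambda_n^2\,a_{0,n-1},
$$
which also matches the coefficient recurrence \eqref{e10} for $k=0$ (with the convention $a_{-1,n}=0$).

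The base cases are immediate: $Q_0\equiv 1$ gives $a_{0,0}=1$ (empty product), and $Q_1(x)=x-d_0=x-(1+\alpha)$ gives $a_{0,1}=-(1+\alpha)$, which matches the claimed formula for $n=1$. Note that the asymmetric value $d_0=1+\alpha$ (as opposed to the generic $d_n=2+\alpha/(n+1)$ valid for $n\ge 1$) is consumed entirely in this base step, so it creates no further bookkeeping issue.

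For the inductive step, assuming the formula for $n-1$ and $n$ with $n\ge 1$, I would substitute into the recurrence and factor out $(-1)^{n+1}\prod_{k=1}^{n-1}(1+\alpha/k)\cdot(1+\alpha/n)$ from both summands. Concretely,
$$
a_{0,n+1}=(-1)^{n+1}\Big(1+\tfrac{\alpha}{n}\Big)\prod_{k=1}^{n-1}\Big(1+\tfrac{\alpha}{k}\Big)\Big[\Big(2+\tfrac{\alpha}{n+1}\Big)-1\Big].
$$
The bracket collapses to $1+\alpha/(n+1)$, which is exactly the missing $k=n+1$ factor, yielding $(-1)^{n+1}\prod_{k=1}^{n+1}(1+\alpha/k)$ and closing the induction.

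There is no genuine obstacle: the argument amounts to a one-line algebraic cancellation made possible by the very specific shape of $d_n$ and $\lambda_n^2$. The only point requiring a touch of care is the $n=0\to n=1$ transition, where the irregular value $d_0=1+\alpha$ must be used instead of the generic formula; but since $Q_{-1}=0$, only the $-d_0\,Q_0(0)$ term appears and the correct value $a_{0,1}=-(1+\alpha)$ drops out. An equivalent, marginally slicker presentation would be to verify the claim in the ratio form $a_{0,n}/a_{0,n-1}=-(1+\alpha/n)$, but the straightforward induction is the shortest route.
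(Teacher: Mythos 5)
Your proof is correct and takes essentially the same route as the paper: both verify the base cases $n=0,1$ directly from $Q_0\equiv 1$ and $Q_1(x)=x-(1+\alpha)$, and both close the induction by substituting the hypothesis into the $k=0$ instance of the recurrence $a_{0,n+1}=-\big(2+\tfrac{\alpha}{n+1}\big)a_{0,n}-\big(1+\tfrac{\alpha}{n}\big)a_{0,n-1}$. The only difference is presentational — you display the factorization and the collapse of the bracket to $1+\tfrac{\alpha}{n+1}$ explicitly, which the paper leaves implicit.
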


\begin{proof}
We apply induction with respect to $n$. Since $a_{0,0}=1$ and
$a_{0,1}=-(1+\alpha)$, Proposition \ref{p2} is true for $n=0$ and
$n=1$. For $k=0$ the recurrence relation \eqref{e10} becomes
$$
a_{0,n+1}=-\Big(2+\frac{\alpha}{n+1}\Big)\,a_{0,n}
-\Big(1+\frac{\alpha}{n}\Big)a_{0,n-1}\,,\quad n\in \mathbb{N}\,.
$$
Assuming Proposition \ref{p2} is true for $m\leq n$, for $m=n+1$ we
obtain
\[
\begin{split}
a_{0,n+1}&=-\Big(2+\frac{\alpha}{n+1}\Big)\,
(-1)^{n}\,\prod_{k=1}^{n}\Big(1+\frac{\alpha}{k}\Big)
-\Big(1+\frac{\alpha}{n}\Big)\,(-1)^{n-1}\prod_{k=1}^{n-1}\Big(1+\frac{\alpha}{k}\Big)\\
&=(-1)^{n+1}\prod_{k=1}^{n+1}\Big(1+\frac{\alpha}{k}\Big)\,,
\end{split}
\]
hence the induction step is done, and Proposition \ref{p2} is
proved. \qed
\end{proof}

Before proceeding with the proof of the formulae for $a_{k,n}$,
$1\leq k\leq 3$, let us point out to the relation
\begin{equation}\label{e11}
a_{0,m+1}=-\Big(1+\frac{\alpha}{m+1}\Big)\,a_{0,m}\,,\quad m\in
\mathbb{N}_0\,,
\end{equation}
which follows from Proposition \ref{p2}, and will be used in the
proof of the next propositions.

\begin{proposition} \label{p3}
For all $\,n\in \mathbb{N}_0$, the coefficient $\,a_{1,n}\,$ of the
polynomial $\,Q_n\,$ is given by
$$
a_{1,n}=-\frac{n(n+1)}{2(\alpha+1)}\;a_{0,n}\,.
$$
\end{proposition}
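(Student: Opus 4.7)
The plan is to prove the formula by induction on $n$, using the three-term recurrence \eqref{e10} specialized to $k=1$ together with the relation \eqref{e11} that links $a_{0,n-1}$, $a_{0,n}$ and $a_{0,n+1}$.

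First I would check the base cases. For $n=0$ both sides vanish ($a_{1,0}=0$). For $n=1$, since $Q_1(x)=x-\alpha-1$, we have $a_{1,1}=1$, and the formula gives $-\tfrac{1\cdot 2}{2(\alpha+1)}\cdot(-(\alpha+1))=1$, as required.

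For the induction step I would assume the formula holds for indices $n-1$ and $n$ and prove it for $n+1$. From \eqref{e10} with $k=1$,
$$
a_{1,n+1}=a_{0,n}-\Big(2+\tfrac{\alpha}{n+1}\Big)a_{1,n}-\Big(1+\tfrac{\alpha}{n}\Big)a_{1,n-1}.
$$
Using the inductive hypothesis, each $a_{1,\cdot}$ on the right becomes a multiple of $a_{0,\cdot}$. The key simplification comes from \eqref{e11} applied with $m=n-1$, which gives $\bigl(1+\tfrac{\alpha}{n}\bigr)a_{0,n-1}=-a_{0,n}$; this converts the last term into a multiple of $a_{0,n}$, so the whole right-hand side becomes a scalar multiple of $a_{0,n}$. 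After collecting like terms, I expect to arrive at
$$
a_{1,n+1}=\frac{(n+2)(n+1+\alpha)}{2(\alpha+1)}\,a_{0,n}.
$$
To match the target $-\tfrac{(n+1)(n+2)}{2(\alpha+1)}a_{0,n+1}$, I would again invoke \eqref{e11} with $m=n$ in the form $a_{0,n+1}=-\tfrac{n+1+\alpha}{n+1}a_{0,n}$, which gives exactly the claimed expression.

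The main obstacle is purely bookkeeping: one must correctly exploit \eqref{e11} in both directions (once to eliminate $a_{0,n-1}$, once to re-introduce $a_{0,n+1}$) and verify the polynomial identity $2(\alpha+1)+n(2n+2+\alpha)-n(n-1)=(n+2)(n+1+\alpha)$ in $n$ and $\alpha$; no genuinely new idea is needed beyond the inductive scheme used for Proposition~\ref{p2}.
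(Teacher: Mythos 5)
Your proposal is correct and follows essentially the same route as the paper: induction on $n$ with base cases $n=0,1$, the recurrence \eqref{e10} with $k=1$, the hypothesis to turn $a_{1,n}$ and $a_{1,n-1}$ into multiples of $a_{0,n}$ and $a_{0,n-1}$, and \eqref{e11} used once to absorb $a_{0,n-1}$ and once to pass from $a_{0,n}$ to $a_{0,n+1}$. The polynomial identity you isolate, $2(\alpha+1)+n(2n+2+\alpha)-n(n-1)=(n+2)(n+\alpha+1)$, is exactly the simplification carried out in the paper.
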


\begin{proof}
Again, we apply induction on $n$. Proposition \ref{p3} is true for
$n=0$ and $n=1$. Indeed, by our convention, $\,a_{1,0}=0$, and
$a_{1,1}=1$ also obeys the desired representation, as
$a_{0,1}=-(1+\alpha)$\,. Assume that Proposition \ref{p3} is true
for $\,m\leq n$, $\,m\in \mathbb{N}$. From the recurrence relation
\eqref{e10} (with $\,k=1$), the induction hypothesis and \eqref{e11}
we obtain
\[
\begin{split}
a_{1,n+1}&=a_{0,n}-\Big(2+\frac{\alpha}{n+1}\Big)\;a_{1,n}
-\Big(1+\frac{\alpha}{n}\Big)\;a_{1,n-1}\\
&=a_{0,n}+\Big(2+\frac{\alpha}{n+1}\Big)\,\frac{n(n+1)}{2(\alpha+1)}\;a_{0,n}
+\Big(1+\frac{\alpha}{n}\Big)\,\frac{(n-1)n}{2(\alpha+1)}\;a_{0,n-1}\\
&=a_{0,n}\,\Big[1+\Big(2+\frac{\alpha}{n+1}\Big)\,\frac{n(n+1)}{2(\alpha+1)}-
\frac{(n-1)n}{2(\alpha+1)}\Big]\\
&=\frac{a_{0,n}}{2(\alpha+1)}\,\big[n^2+(\alpha+3)n+2(\alpha+1)\big]
=a_{0,n}\,\frac{(n+2)(n+\alpha+1)}{2(\alpha+1)}\\
&=\frac{(n+1)(n+2)}{2(\alpha+1)}\,\Big(1+\frac{\alpha}{n+1}\Big)\;a_{0,n}
=-\frac{(n+1)(n+2)}{2(\alpha+1)}\;a_{0,n+1}\,.
\end{split}
\]
Hence,  the induction step is done, and the proof of Proposition
\ref{p3} is complete. \qed
\end{proof}

\begin{proposition} \label{p4}
For all $\,n\in \mathbb{N}_0$, the coefficient $\,a_{2,n}\,$ of the
polynomial $\,Q_n\,$ is given by
$$
a_{2,n}=\frac{(n-1)n(n+1)}{24(\alpha+1)(\alpha+2)(\alpha+3)}
\;\big[3(\alpha+2)n+2(\alpha+6)\big]\;a_{0,n}\,.
$$
\end{proposition}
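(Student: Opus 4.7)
The plan is to prove Proposition~\ref{p4} by induction on $n$, in exactly the same spirit as Propositions~\ref{p2} and~\ref{p3}. The base cases $n=0,1$ are immediate from the convention $a_{2,0}=a_{2,1}=0$ (the right-hand side vanishes because of the factor $(n-1)n$). As a sanity check one should also verify $n=2$: the formula predicts $a_{2,2}=\frac{2}{(\alpha+1)(\alpha+2)}\,a_{0,2}$, and since $a_{0,2}=(\alpha+1)(\alpha+2)/2$ by Proposition~\ref{p2}, this yields $a_{2,2}=1$, consistent with the convention $a_{m,m}=1$.

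For the induction step, assume the formula holds for $n-1$ and $n$, and apply the recurrence \eqref{e10} with $k=2$,
$$
a_{2,n+1}=a_{1,n}-\Big(2+\frac{\alpha}{n+1}\Big)\,a_{2,n}-\Big(1+\frac{\alpha}{n}\Big)\,a_{2,n-1}\,.
$$
Substitute $a_{1,n}$ from Proposition~\ref{p3}, and $a_{2,n}$, $a_{2,n-1}$ from the induction hypothesis. Then use \eqref{e11} in the form $a_{0,n-1}=-\frac{n}{n+\alpha}\,a_{0,n}$ to rewrite every $a_{0,m}$ appearing in the expression as a multiple of $a_{0,n}$. Remarkably, the prefactor $\bigl(1+\frac{\alpha}{n}\bigr)=\frac{n+\alpha}{n}$ in front of $a_{2,n-1}$ exactly cancels the denominator $n+\alpha$ produced by this substitution, just as happened in the proof of Proposition~\ref{p3}. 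After pulling out the common factor $\dfrac{a_{0,n}}{24(\alpha+1)(\alpha+2)(\alpha+3)}$, one is left with a polynomial identity in $n$ and $\alpha$ to verify.

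Concretely, the identity to check takes the form
$$
-12\,n(n+1)(\alpha+2)(\alpha+3)-(2n+\alpha+2)(n-1)n\,[3(\alpha+2)n+2(\alpha+6)]
$$
$$
+(n-2)(n-1)n\,[3(\alpha+2)(n-1)+2(\alpha+6)]=-n(n+2)(n+\alpha+1)\,[3(\alpha+2)(n+1)+2(\alpha+6)]\,,
$$
the right-hand side being obtained from the target formula for $a_{2,n+1}$ after using \eqref{e11} once more to convert $a_{0,n+1}=-\frac{n+\alpha+1}{n+1}\,a_{0,n}$. Both sides are polynomials of degree $3$ in $n$ and degree $2$ in $\alpha$, so the verification is mechanical.

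The main (and only) obstacle is thus the bookkeeping in this last polynomial identity: the identity itself is routine but sufficiently bulky to invite arithmetic errors, exactly as the authors warn in the introduction about parts of the proof requiring computer algebra assistance. Everything else — the inductive scheme, the role of \eqref{e11} in folding $a_{0,n-1}$ into $a_{0,n}$, and the final repackaging of $a_{0,n}$ as $a_{0,n+1}$ — is a direct replay of the template already set by Propositions~\ref{p2} and~\ref{p3}.
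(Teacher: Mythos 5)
Your proposal is correct and follows essentially the same route as the paper: induction via the recurrence \eqref{e10} with $k=2$, combined with Proposition \ref{p3} and the relation \eqref{e11}, reducing everything to a mechanical polynomial identity in $n$ and $\alpha$ (which does hold --- the only slip is that both sides of your displayed identity have degree $4$ in $n$, not $3$). The paper's argument is identical in substance, merely pulling out the factor $n(n+1)/(n+\alpha+1)$ so as to express the result directly in terms of $a_{0,n+1}$ rather than converting at the end.
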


\begin{proof}
The claim is true for $n=0,\,1$ (according to our convention), and
also for $n=2$, as in this case, taking into account that
$a_{0,2}=1/\big((1+\alpha)(1+\alpha/2)\big)$,  the above formula
produces $a_{2,2}=1$. Assume now that the proposition is true for
$m\leq n$, where $n\in \mathbb{N}$,  $\,n\ge 2\,$. We shall prove
that it is true for $m=n+1$, too, thus proving Proposition \ref{p4}
by induction. On using the recurrence relation \eqref{e10} (with
$k=2$), the inductional hypothesis, Proposition \ref{p3} and
\eqref{e11} we obtain
\[
\begin{split}
a_{2,n+1}&=a_{1,n}-\Big(2+\frac{\alpha}{n+1}\Big)\;a_{2,n}
-\Big(1+\frac{\alpha}{n}\Big)\;a_{2,n-1}\\
&=-\frac{n(n+1)}{2(\alpha+1)}\;a_{0,n}-\Big(2+\frac{\alpha}{n+1}\Big)\,
\frac{(n\!-\!1)n(n\!+\!1)\big[3(\alpha\!+\!2)n
\!+\!2(\alpha\!+\!6)\big]}{24(\alpha+1)(\alpha+2)(\alpha+3)}\;a_{0,n}\\
&\quad +\frac{(n\!-\!2)(n\!-\!1)n\big[3(\alpha\!+\!2)(n\!-\!1)
\!+\!2(\alpha\!+\!6)\big]}{24(\alpha+1)(\alpha+2)(\alpha+3)}\;a_{0,n}\\
&=\frac{n(n+1)}{n+\alpha+1}\Bigg[\frac{n+1}{2(\alpha+1)}+
\Big(2+\frac{\alpha}{n+1}\Big)\,\frac{(n^2-1)\big[3(\alpha+2)n+2(\alpha+6)\big]}
{24(\alpha+1)(\alpha+2)(\alpha+3)}\\
&\qquad\qquad\qquad -
\frac{(n-2)(n-1)\big[3(\alpha+2)(n-1)+2(\alpha+6)\big]}
{24(\alpha+1)(\alpha+2)(\alpha+3)}\Bigg]\;a_{0,n+1}\,.
\end{split}
\]
After some calculations the expression in the big brackets
simplifies to
$$
\frac{(n+2)(n+\alpha+1)\big[(3(\alpha+2)(n+1)+2(\alpha+6)\big]}
{24(\alpha+1)(\alpha+2)(\alpha+3)}\,.
$$
and substitution of this expression yields the desired formula for
$a_{2,n+1}$. The induction proof of Proposition \ref{p4} is
complete.  \qed
\end{proof}

\begin{proposition} \label{p5}
For all $\,n\in \mathbb{N}_0$, the coefficient $\,a_{3,n}\,$ of the
polynomial $\,Q_n\,$ is given by
$$
a_{3,n}\!=\!\frac{-(n\!-\!2)(n\!-\!1)n(n\!+\!1)
\big[5(\alpha\!+\!2)(\alpha\!+\!4)n(n\!+\!1)\!+\!
8(7\alpha\!+\!20)n\!+\!12(\alpha\!+\!20)\big]}
{240(\alpha+1)(\alpha+2)(\alpha+3)(\alpha+4)(\alpha+5)}\,a_{0,n}.
$$
\end{proposition}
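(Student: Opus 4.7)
The proof will proceed by induction on $n$, parallel in structure to the proofs of Propositions \ref{p2}--\ref{p4}.

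For the base cases, observe that the formula carries the factor $(n-2)(n-1)n(n+1)$, which vanishes for $n=0,1,2$, in agreement with the convention $a_{3,n}=0$ for $n<3$. For $n=3$, direct evaluation of the bracket gives $5(\alpha+2)(\alpha+4)\cdot 12+24(7\alpha+20)+12(\alpha+20)=60(\alpha+4)(\alpha+5)$; combined with $a_{0,3}=-\frac{(\alpha+1)(\alpha+2)(\alpha+3)}{6}$ and the prefactor $\tfrac{-24}{240(\alpha+1)(\alpha+2)(\alpha+3)(\alpha+4)(\alpha+5)}$, this yields $a_{3,3}=1$, as required.

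For the inductive step, I assume the formula holds for all $m\le n$ with $n\ge 3$, and apply the recurrence \eqref{e10} with $k=3$:
$$
a_{3,n+1}=a_{2,n}-\Big(2+\tfrac{\alpha}{n+1}\Big)\,a_{3,n}-\Big(1+\tfrac{\alpha}{n}\Big)\,a_{3,n-1}.
$$
I would substitute the formula for $a_{2,n}$ from Proposition \ref{p4} and the inductive expressions for $a_{3,n}$ and $a_{3,n-1}$. Each term is proportional to $a_{0,n}$ or $a_{0,n-1}$; using \eqref{e11} in the form $a_{0,n-1}=-\frac{n}{n+\alpha}\,a_{0,n}$ and $a_{0,n}=-\frac{n+1}{n+\alpha+1}\,a_{0,n+1}$, everything normalizes to a common factor $a_{0,n+1}$, with a prefactor that I would arrange as $\frac{-(n-1)n(n+1)(n+2)}{240(\alpha+1)(\alpha+2)(\alpha+3)(\alpha+4)(\alpha+5)}$ multiplied by a rational expression in $n$ and $\alpha$.

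The target is then to verify that the remaining factor equals $5(\alpha+2)(\alpha+4)(n+1)(n+2)+8(7\alpha+20)(n+1)+12(\alpha+20)$, i.e., the formula for $a_{3,n+1}$. This is the main obstacle: the algebra is routine but long, because one must expand three rational terms with denominators $(n+\alpha)$, $(n+\alpha+1)$ and constants, bring them to a common denominator, and cancel the factor $(n+\alpha)(n+\alpha+1)(n+1)$ before reading off a polynomial identity in $n$ and $\alpha$. After clearing denominators, the identity to be checked is polynomial of degree at most $4$ in $n$ and $3$ in $\alpha$, so it suffices to match coefficients of the monomials $n^i\alpha^j$ on both sides; equivalently, one may verify agreement at sufficiently many integer pairs $(n,\alpha)$ and invoke polynomial uniqueness. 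Following the remark in the paper that parts of the computation were handled with a computer algebra system, I would carry out this matching with such assistance. Once the simplification is confirmed, the induction step is complete and Proposition \ref{p5} follows. \qed
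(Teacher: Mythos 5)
Your proposal is correct and follows essentially the same route as the paper: induction on $n$, base cases $0\le n\le 3$, the recurrence \eqref{e10} with $k=3$ combined with Proposition \ref{p4} and \eqref{e11} to normalize everything to a multiple of $a_{0,n+1}$, and a computer-algebra-assisted verification of the resulting polynomial identity. The only (immaterial) difference is bookkeeping: you fold the factor $(n+2)$ into the prefactor at the outset, whereas the paper leaves it inside the quartic $r(n)$ and extracts $(n+2)(n+\alpha+1)$ at the end.
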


\begin{proof}
Again, induction is applied with respect to $n$. The formula for
$a_{3,n}$ is easily verified to be true for $\,0\leq n\leq 3$. Then,
assuming that this formula is true for $\,m\leq n\,$,  where $\,n\in
\mathbb{N}$, $n\geq 3$, we prove that it is true also for $m=n+1$,
too. The induction step is performed along the same lines as the one
in the proof of Proposition \ref{p4}. First, we make use of the
recurrence relation \eqref{e10} with $k=3$ to express $a_{3,n+1}$ as
a linear combination of $a_{2,n}$, $a_{3,n}$ and $a_{3,n-1}$. Next,
we apply the inductional hypothesis and \eqref{e11} to represent
$a_{3,n+1}$ in the form
$$
a_{3,n+1}=\frac{-(n-1)n(n+1)}{240(\alpha+1)(\alpha+2)(\alpha+3)
(\alpha+4)(\alpha+5)}\,\frac{r(n)}{n+\alpha+1}\;a_{0,n+1}\,,
$$
where $r(n)=r(n,\alpha)$ is a polynomial of $4$-th degree. With some
lengthy tough straightforward calculation (we used a computer
algebra program for verification) we obtain that
$$
r(n)\!=\!(n\!+\!2)(n\!+\!\alpha\!+\!1)
\big[5(\alpha\!+\!2)(\alpha\!+\!4)(n\!+\!1)(n\!+\!2)\!+\!
8(7\alpha\!+\!20)(n\!+\!1)\!+\!12(\alpha\!+\!20)\big]
$$
and this expression substituted in the above formula implies the
desired representation of $a_{3,n+1}$. To keep the paper condensed,
we omit the details. \qed
\end{proof}

\section{Proof of Theorem \ref{t2}}
\label{s4}

For the proof of Theorem \ref{t2} we prefer to work with the
(constant multiplier of) reciprocal polynomial of  $Q_n$
$$
P_n(x)=P_n(x,\alpha)=
(-1)^{n}\,\big(a_{0,n}\big)^{-1}\,x^{n}\,Q_n\big(x^{-1}\big)\,.
$$
Clearly, $P_n$ is a monic polynomial of degree $n$,
$$
P_n(x)=x^{n}-b_1\,x^{n-1}+b_2\,x^{n-2}-b_3\,x^{n-3}+\cdots
$$
and, in view of Propositions \ref{p2}--\ref{p5}, its coefficients
$b_1$, $b_2$ and $b_3$ are
\begin{eqnarray*}
&&b_1=\frac{n(n+1)}{2(\alpha+1)}\,,\quad
b_2=\frac{(n-1)n(n+1)}{24(\alpha+1)(\alpha+2)(\alpha+3)}
\;\big[3(\alpha+2)n+2(\alpha+6)\big]\,,\\
&& b_3=\frac{(n\!-\!2)(n\!-\!1)n(n\!+\!1)
\big[5(\alpha\!+\!2)(\alpha\!+\!4)n(n\!+\!1)\!+\!
8(7\alpha\!+\!20)n\!+\!12(\alpha\!+\!20)\big]}
{240(\alpha+1)(\alpha+2)(\alpha+3)(\alpha+4)(\alpha+5)}\,.
\end{eqnarray*}

As was said in Sect. \ref{s2.1}, $Q_n(x,\alpha)$ is identified an
orthogonal polynomial with positive and distinct zeros. Therefore,
the same can be said for the zeros of $P_n$ (as reciprocal of
$Q_n$). If $x_n$ is the largest zero of $P_n$, then, according to
Theorem \ref{t4}, we have $\,\big[c_n(\alpha)\big]^2=x_n$.\smallskip

Now Proposition \ref{p1} (iii) applied to $P=P_n$ yields immediately
the following
\begin{proposition} \label{p6}
For all $\,n\in \mathbb{N}\,$, $n\geq 3\,$, the best Markov constant
$c_n(\alpha)$ satisfies
$$
\frac{b_1^3-3b_1\,b_2+3b_3}{b_1^2-2b_2}<\big[c_n(\alpha)\big]^2 <
(b_1^3-3b_1\,b_2+3b_3)^{\frac{1}{3}}
$$
with $b_1$, $b_2$ and $b_3$ as given above.
\end{proposition}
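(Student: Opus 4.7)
The plan is to assemble Proposition \ref{p6} directly from the machinery already in place, since all the hard work (the closed-form expressions for $a_{0,n},a_{1,n},a_{2,n},a_{3,n}$ and the zero-bound of Proposition \ref{p1}(iii)) is done.

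First I would argue that $P_n$ is a monic polynomial of degree $n$ all of whose roots are positive. By Favard's theorem cited in Section \ref{s2.1}, $Q_n(\cdot,\alpha)$ is orthogonal with respect to a measure on $\mathbb{R}_+$, so its $n$ zeros $0<\xi_1<\cdots<\xi_n$ are simple and positive. Hence the roots of the reciprocal polynomial $(-1)^{n}(a_{0,n})^{-1}x^{n}Q_n(x^{-1})=P_n(x)$ are exactly $1/\xi_n<\cdots<1/\xi_1$, all positive. In particular, the largest zero $x_n$ of $P_n$ equals $1/\xi_1$, where $\xi_1$ is the smallest zero of $Q_n$. By Theorem \ref{t4}, $\xi_1=1/[c_n(\alpha)]^2$, so $x_n=[c_n(\alpha)]^2$.

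Second, I would apply Proposition \ref{p1}(iii) to the polynomial $P=P_n$, with $b_1,b_2,b_3$ as its three leading coefficients after the $x^n$ term (taken with alternating signs as in the statement of Proposition \ref{p1}); this is legitimate precisely because $P_n$ has positive roots. Proposition \ref{p1}(iii) gives
$$
\frac{b_1^3-3b_1b_2+3b_3}{b_1^2-2b_2}\le x_n<(b_1^3-3b_1b_2+3b_3)^{1/3},
$$
and the inequality on the left is strict unless all $\xi_i$ coincide, which cannot happen because the zeros of an orthogonal polynomial are distinct. Substituting $x_n=[c_n(\alpha)]^2$ gives the two-sided bound in the proposition.

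Third, I would pin down the values of $b_1,b_2,b_3$ by reading them off Propositions \ref{p2}--\ref{p5}: since $P_n(x)=(-1)^{n}(a_{0,n})^{-1}x^{n}Q_n(x^{-1})$ and $Q_n$ has lowest-degree coefficients $a_{0,n},a_{1,n}x,a_{2,n}x^2,a_{3,n}x^3$, the coefficient of $x^{n-k}$ in $P_n$ is $(-1)^{n}a_{k,n}/a_{0,n}$, so $(-1)^{k}b_k=(-1)^{n}a_{k,n}/a_{0,n}$, i.e.\ $b_k=(-1)^{n+k}a_{k,n}/a_{0,n}$. Plugging the formulas from Propositions \ref{p3}--\ref{p5} into this identity cancels the factor $a_{0,n}$ and yields exactly the expressions displayed for $b_1,b_2,b_3$. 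No new computation is required.

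The only subtlety — and it is minor — is checking that the reciprocal-polynomial operation preserves positivity of roots (which needs $a_{0,n}\ne 0$, automatic from Proposition \ref{p2} since $\alpha>-1$) and that the strict inequality on the left of Proposition \ref{p1}(iii) applies here because the orthogonal-polynomial zeros of $Q_n$ are necessarily distinct; everything else is direct substitution.
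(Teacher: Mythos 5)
Your proposal is correct and is essentially the paper's own proof (which simply says that Proposition \ref{p1}(iii) applied to $P=P_n$ yields the result), with the useful extra care of justifying the positivity and distinctness of the roots of $P_n$ and hence the strictness of the left-hand inequality. One cosmetic remark: the identity $b_k=(-1)^{n+k}a_{k,n}/a_{0,n}$ that you derive from the stated definition of $P_n$ reproduces the displayed $b_1,b_2,b_3$ only up to the global factor $(-1)^n$ (the prefactor $(-1)^n(a_{0,n})^{-1}$ makes the leading coefficient $(-1)^n$ rather than $1$); the displayed coefficients correspond to the genuinely monic normalization $(a_{0,n})^{-1}x^nQ_n(x^{-1})$, i.e.\ $b_k=(-1)^k a_{k,n}/a_{0,n}$, and since both normalizations have the same roots this does not affect the argument.
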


The estimates for $c_n(\alpha)\big]$ in Theorem \ref{t2} are a
consequence of Proposition \ref{p6}. For the proof of the lower
bound, we obtain that
\[
\begin{split}
b_1^3-3b_1\,b_2+3b_3&-\frac{2}{(\alpha+3)(\alpha+5)}\,
\Big(n+\frac{2\alpha}{3}\Big)\Big(n-\frac{\alpha+1}{6}\Big)(b_1^2-2b_2)\\
&=\frac{1}{(\alpha+1)^3(\alpha+2)(\alpha+3)(\alpha+4)(\alpha+5)}
\;\sum_{j=1}^{5}\kappa_j(\alpha)\,n^{j}\,,
\end{split}
\]
with
\begin{eqnarray*}
&&\kappa_1(\alpha)=\frac{1}{270}\,(1+\alpha)^2(10\,\alpha^{3}
+100\,\alpha^2+321\,\alpha+1620)\,,\\
&&\kappa_2(\alpha)=\frac{1}{36}\,(1+\alpha)(4\,\alpha^4+35\,\alpha^{3}
+166\,\alpha^{2}+417\,\alpha+660)\,,\\
&&\kappa_3(\alpha)=\frac{1}{54}\,(4\,\alpha^5+36\,\alpha^{4}
+192\,\alpha^{3}+625\,\alpha^2+1527\,\alpha+1332)\,,\\
&&\kappa_4(\alpha)=\frac{1}{36}\,(\alpha^4-\alpha^{3}
+157\,\alpha^{2}+579\,\alpha+780)\,,\\
&&\kappa_5(\alpha)=\frac{1}{30}\,(\alpha^3+7\,\alpha^{2}
+136\,\alpha+280)\,.
\end{eqnarray*}
Obviously, $\,\kappa_j(\alpha)>0\,$  for $\,\alpha>-1$, $1\leq j\leq
5$, and hence the lower bound holds:
$$
\big[c_n(\alpha)\big]^2>\frac{b_1^3-3b_1\,b_2+3b_3}{b_1^2-2\,b_2}
>\frac{2}{(\alpha+3)(\alpha+5)}\,
\Big(n+\frac{2\alpha}{3}\Big)\Big(n-\frac{\alpha+1}{6}\Big)\,.
$$

For the proof of the upper bound for $c_n(\alpha)$ in Theorem
\ref{t2}, we find that
\[
\begin{split}
\frac{1}{(\alpha+1)^3(\alpha+3)(\alpha+5)}&\,
(n+1)^3\Big(n+\frac{2(\alpha+1)}{5}\Big)^{3}-\big(b_1^3-3b_1\,b_2+3b_3\big)\\
&=\frac{1}{(\alpha+1)^2(\alpha+2)(\alpha+3)(\alpha+4)(\alpha+5)}
\;\sum_{j=0}^{5}\nu_j(\alpha)\,n^{j}\,,
\end{split}
\]
where
\begin{eqnarray*}
&&\nu_0(\alpha)=\frac{8}{125}\,(1+\alpha)^2(2+\alpha)(4+\alpha)\,;\\
&&\nu_1(\alpha)=\frac{3}{250}\,(1+\alpha)(16\,\alpha^{3}
+152\,\alpha^2+439\,\alpha-52)\,,\\
&&\nu_2(\alpha)=\frac{1}{500}\,(96\,\alpha^4+1363\,\alpha^{3}
+5656\,\alpha^{2}+9167\,\alpha+2828)\,,\\
&&\nu_3(\alpha)=\frac{1}{250}\,(16\,\alpha^4+
363\,\alpha^{3}+2506\,\alpha^2+7167\,\alpha+4708)\,,\\
&&\nu_4(\alpha)=\frac{1}{100}\,(23\,\alpha^{3}
+446\,\alpha^{2}+1657\,\alpha+2164)\,,\\
&&\nu_5(\alpha)=\frac{3}{5}\,(5\alpha+16)\,.
\end{eqnarray*}

We shall show now that
\begin{equation}\label{e12}
\sum_{j=0}^{5}\nu_{j}(\alpha)\,n^{j}\geq 0\,,\qquad n\geq 2\,,\ \
\alpha>-1\,.
\end{equation}

Notice that, unlike the case with the coefficients
$\,\{\kappa_{j}(\alpha)\}_{j=1}^{5}$, which are all positive for all
admissible values of $\,\alpha\,$, i.e., $\alpha>-1\,$, here the
coefficients $\,\nu_j(\alpha)\,$, $1\leq j\leq 3$, assume negative
values for some $\alpha\in (-1,0)\,$ ($\nu_1(\alpha)$ is negative
also for some $\,\alpha>0$).

Since $\,\nu_4(\alpha)\,$ and $\,\nu_5(\alpha)\,$ are positive for
$\,\alpha>-1\,$,  for $\,n\geq 2$ we have
$$
\sum_{j=3}^{5}\nu_{j}(\alpha)\,n^{j}\geq\big(4\,\nu_5(\alpha)+2\,\nu_4(\alpha)
+\nu_3(\alpha)\big)n^3=:\widetilde{\nu}_3(\alpha)\,n^3\,,
$$
where
$$
\widetilde{\nu}_3(\alpha)=\frac{1}{125}\,(8\,\alpha^4+239\,\alpha^{3}
+2368\,\alpha^{2}+9226\,\alpha+12564)\,.
$$

Since $\,\widetilde{\nu}_3(\alpha)>0\,$ for $\,\alpha>-1$, we have
$$
\sum_{j=2}^{5}\nu_{j}(\alpha)\,n^{j}\geq
\big(2\widetilde{\nu}_3(\alpha)+\nu_2(\alpha)\big)\,n^2
=:\widetilde{\nu}_2(\alpha)\,n^2\,,\qquad n\geq 2\,,
$$
where
$$
\widetilde{\nu}_2(\alpha)=\frac{1}{100}\,(32\,\alpha^4+655\,\alpha^{3}
+4920\,\alpha^{2}+16595\,\alpha+20668)\,.
$$

Now, from $\,\widetilde{\nu}_2(\alpha)>0\,$ for $\,\alpha>-1$, we
obtain
$$
\sum_{j=1}^{5}\nu_{j}(\alpha)\,n^{j}\geq
\big(2\widetilde{\nu}_2(\alpha)+\nu_1(\alpha)\big)\,n
=:\widetilde{\nu}_1(\alpha)\,n\,,\qquad n\geq 2\,,
$$
with
$$
\widetilde{\nu}_1(\alpha)=\frac{1}{250}\,(160\,\alpha^4+3323\,\alpha^3
+25056\,\alpha^2+84292\,\alpha+103184)>0\,,\quad \alpha>-1\,.
$$
Hence, $\sum_{j=0}^{5}\nu_{j}(\alpha)\,n^{j}\geq
\widetilde{\nu}_1(\alpha)\,n+\nu_{0}(\alpha)>0\,$, and \eqref{e12}
is proved. From \eqref{e12} we conclude that
$$
\frac{1}{(\alpha+1)^3(\alpha+3)(\alpha+5)}\,
(n+1)^3\Big(n+\frac{2(\alpha+1)}{5}\Big)^{3}>b_1^3-3b_1\,b_2+3b_3\,,
$$
In view of Proposition \ref{p6}, the latter inequality proves the
upper bound for $c_n(\alpha)$ in Theorem \ref{t2}.

\section{Concluding Remarks} \label{s5}
\textbf{1.} Our main concern here is the major terms in the bounds
for the best Markov constant $c_n(\alpha)$, obtained through
Proposition \ref{p1}. We did not care much about the lower degree
terms, where perhaps some improvement is possible.\smallskip

\textbf{2.} Obviously, D\"{o}rfler's upper bound for $c_{n}(\alpha)$
in \eqref{e4} is a consequence of Proposition \ref{p1} (i).
D\"{o}rfler's lower bound for $c_{n}(\alpha)$ in \cite{PD1991},
which is slightly better than the one given in \eqref{e4}, is
obtained from Proposition \ref{p1} (ii). Both our lower and upper
bounds for the asymptotic constant $\,c(\alpha)$, given in
Corollary~\ref{c1}, are superior for all $\alpha>-1$ to
D\"{o}rfler's bounds obtained from \eqref{e4} .
\smallskip

\textbf{3.} The upper bounds for the largest zero $x_n$ of a
polynomial having only real and positive zeros in Proposition
\ref{p1} (ii) and (ii) admit some improvement. For instance, in
Proposition \ref{p1} (ii)  one can apply the quadratic mean --
arithmetic mean inequality to obtain
$$
b_1^2-2b_2=x_n^2+\sum_{i=1}^{n-1}x_i^2\ge
x_n^2+\frac{\Big(\sum_{i=1}^{n-1}|x_i|\Big)^2}{n-1}\ge
x_n^2+\frac{(b_1-x_n)^2}{n-1}\,,
$$
which yields a (slightly stronger) quadratic inequality for $x_n$
(actually, for any of the zeros of the polynomial $P$),
$$
n\,x_n^2-2b_1\,x_n+2(n-1)b_2-(n-2)b_1^2\le 0\,.
$$
The solution of the latter inequality,
$$
\frac{1}{n}\Big[b_1-\sqrt{(n-1)^2b_1^2-2(n-1)n\,b_2}\,\Big]\le x_n
\le \frac{1}{n}\Big[b_1+\sqrt{(n-1)^2b_1^2-2(n-1)n\,b_2}\,\Big]\,,
$$
provides lower and upper bounds for the zeros of an arbitrary
real-root monic polynomial of degree $n$ in terms of its two leading
coefficients $b_1$ and $b_2$. This result, due to Laguerre, is known
also as Laguerre-Samuelson inequality (for more details, see e.g.
\cite{STJ1999} and the references therein).
\smallskip

In a similar way one can obtain a slight improvement for the upper
bound in Proposition \ref{p1} (iii). However, in our case this
improvement is negligible (it affects only the lower degree terms in
the upper bound for $c_n(\alpha)$).

\begin{acknowledgement}
The research on this paper was conducted during a visit of the
first-named author to the Department of Applied Mathematics and
Theoretical Physics of the University of Cambridge in January, 2015.
The work was accomplished during a three week stay of the authors in
the Oberwolfach Mathematical Institute in April, 2016 within the
Research in Pairs Program. The first-named author acknowledges the
partial support by the Sofia University Research Fund through
Gontract no. 30/2016.
\end{acknowledgement}
%

%
%
%

\end{document}